\gdef\th@remark{%
	\thm@headfont{\small\scshape}%
}
\theoremstyle{remark}
\newtheorem{rmk}{Remark}
\newtheorem{pro}{Proposition}
\begin{document}

\title{Dynamics of a multilink wheeled vehicle: partial solutions and unbounded
	speedup}

\author{E.M. Artemova}
\email{liz-artemova2014@yandex.ru}
\author{I.A. Bizyaev}
\email{bizyaevtheory@gmail.com}

\affiliation{Ural Mathematical Center, Udmurt State University, ul. Universitetskaya 1, 426034 Izhevsk, Russia}

\begin{abstract}
 A mathematical model featuring the motion of a multilink
wheeled vehicle is developed using a nonholonomic model.
A detailed analysis of the inertial motion is made.  Fixed points
of the reduced system are identified, their stability is analyzed,
and invariant manifolds are found. For the case of three platforms (links),
a phase portrait for motion on an invariant manifold is shown and trajectories of the
attachment points of the wheel pairs of the three-link vehicle are presented.
In addition, an analysis is made of motion in the case where the leading platform has a
rotor whose angular velocity is a periodic function of time.
The existence of trajectories for which one of the velocity components
increases without bound is established, and the asymptotics for it is found.
\end{abstract}

	\maketitle

\tableofcontents

\section{Introduction}
\label{}

This paper continues a series of studies of the dynamics of various wheeled vehicles using a nonholonomic model. We recall that this model assumes the absence of slipping at the points of contact of the wheels with the supporting plane.  It was shown in \citep{borisov2015hadamard} that, if the wheel has the form of a circle with one point of contact with the supporting plane, it is equivalent to a skate and the absence of slipping in the direction perpendicular to the plane of the skate can be described by a nonholonomic constraint. In what follows, we will assume this condition to be satisfied. Of course, this approach does not take into account more complicated wheel designs such as, for example, the omniwheel \citep{Borisov2015Dynamics} or the origami wheel \citep{Fonseca2020Nonlinear}.

The simplest and best studied wheeled vehicle is a rigid body with a wheel pair fastened
to it. This system is equivalent to the Chaplygin sleigh \citep{chaplygin1912}. A detailed
analysis of the motion of the Chaplygin sleigh was carried out by Carath\'{e}odory
\citep{caratheodory1933}, and various generalizations of this problem were considered in
\citep{borisov2009mamaev, Czudkova2013Nonholonomic}. The problems of controlling such a wheeled vehicle are interesting from the viewpoint of practical application. The motion of the Chaplygin sleigh under the
action of periodic changes in the mass distribution was investigated in \citep{osborne2005}.
In \citep{Bizyaev2021Normal} (see also references therein), a description is given of an
interesting dynamical phenomenon,
a nonlinear nonholonomic acceleration under the action of bounded periodic excitation
generated by the prescribed motion of structural elements (point masses, rotors etc.).
Various versions of the problem of the optimal control of such a vehicle by controlling the
translational and angular velocities were addressed in \citep{sachkov2010maxwell, Ardentov2023OF}.
The influence of dry friction on the motion of the symmetric and asymmetric Chaplygin sleigh was
investigated in \citep{Karapetyan2019, Shamin2022}, and the influence of viscous friction was examined in \citep{fedonyuk2020locomotion}.

More elaborate designs are those of two-link and three-link
vehicles consisting of two or three platforms with rigidly attached wheel pairs.
The most general asymmetric two-link wheeled vehicle of such type is usually called a {\it
	Roller Racer}. The inertial and controlled motion of a Roller Racer was
examined in detail in \citep{Bizyaev2017roller, BizyaevMamaev2023, Krishnaprasad2001,Chakon2017Analysis, Halvani2022Nonholonomic, bazzi2017motion}. The inertial motion of a symmetric three-link vehicle, namely,
the special case in which the system is integrable was studied in \citep{Artemova2020}.
The motion of a Roller Racer where the point of attachment of the wheel pair coincides with
the point of coupling of the platforms, with periodically moving point masses on the leading
platform, was examined in \citep{Mikishanina2021,Mikishanina2023problem}. A stability
analysis of the straight-line motion of a Roller Racer on a vibrating plane was carried out
in \citep{Artemova2022}. 

The snakeboard is an intermediate model between the Roller Racer and the three-link robot. It is a three-link robot on whose central platform there is no wheel pair. The free motion (without controls) of a snakeboard was studied in \citep{borisov2019invariant}, and the controllability of this system by the Rashevsky-Chow theorem was shown in \citep{ostrowski1994nonholonomic}. The motion of a snakeboard was also studied, for example, in \citep{bloch1996nonholonomic, shammas2007towards, janova2010streetboard}. An analysis of the controlled motion by controlling one or several angles between the links of a three-link robot was carried out in \citep{dear2016locomotive, Jing, Kilin2022, shammas2007geometric}. It is well known that ``singular’’ configurations \citep{Dear, nansai2018generalized} arise when the angles between the platforms in multilink systems are chosen as controls. When the robot passes through these configurations, the constraint reactions begin to grow without bound, so that the nonholonomic model becomes inapplicable. Therefore \citep{yona2019wheeled}, a hybrid model was proposed  with switching between the nonholonomic model and the model taking into account sliding obeying the law of Coulomb friction, for a symmetric three-link robot.

The dynamics of a wheeled vehicle consisting of three or more platforms is explored in
\citep{Naranjo2015}, where equations of motion are obtained for an articulated $N$-trailer
robot performing inertial motion on a plane and it is assumed that the trailer consists of
$N$ identical platforms. The control of such wheeled systems, which consist of $N$ coupled
platforms, with a wheel pair fastened to each of them, was discussed, for example, in
\citep{transeth2008modelling,Tanaka}. The controlled motion of such robots with passive (free) platforms and an experiment in controlling  an $N$-trailer robot (with $N=4$) were performed in \citep{dear2020locomotion}.

Nonetheless, the dynamics of an $N$-link ($N>3$) wheeled vehicle remains
largely underexplored. This is partially due to the problem of deriving equations of motion
and reducing them to a form convenient for investigation. For example, incorporating a platform (link) leads to an additional nonholonomic constraint, which, in turn, leads to addition of an undetermined Lagrange
multiplier to the equations of motion (see, e.g., \citep{Kozlov2008Gauss, Ghori1999Principles}).
In this paper, we choose quasi-velocities in such a way that
nonholonomic constraints are given by equality of these quasi-velocities to zero.
Such a parameterization is described in detail in \citep{borisov2015symmetries} and is called
a {\it natural parameterization}, but it was used already by Hamel \citep{Hamel}.
A natural parameterization allows undetermined multipliers to be eliminated from equations
of motion. As quasi-velocities we choose the translational and angular velocities of the platforms in the moving coordinate system. Moreover, for the system under consideration it turned out that part of constraints
can be taken into account in the Lagrangian function prior to substitution into the equations of
motion.
All this has made it possible to obtain equations of motion  for the system
in a sufficiently compact form.

In the general case, the straight-line motion (without control) for wheeled systems
can lose stability \citep{BizyaevMamaev2023, Troger1984Nonlinear} as the parameters are varied.
In this paper, it is shown that, for the system considered,
the straight-line motion is stable for an arbitrary number of links and for any parameter values.
It is also shown that adding a periodic control action (a rotor) leads to an unbounded
speedup of the wheeled vehicle in a neighborhood of the solution corresponding
to straight-line motion, and the asymptotics of velocities are obtained. This paper is mainly a generalization of \citep{BizyaevMamaev2023} to the case of $N$ platforms.

\section{Mathematical model}

{\bf Design and main assumptions.} Consider a wheeled vehicle that moves on a horizontal
plane and consists of $N+1$ coupled platforms (see Fig. \ref{pic0}). The platforms are connected
to each other by means of hinges in such a way that they can freely (i.e., without friction) rotate about the vertical
axis passing through the point of articulation and perpendicular to the
plane of motion.

We make some natural assumptions concerning the design of each of the platforms.	
\begin{itemize}
	\item[{\bf A1.}]  A pair of wheels are rigidly attached to each platform
	and roll without slipping on the supporting plane.
	\item[{\bf A2.}] The center of mass of each wheel pair is at the center of the axis
	connecting the wheels.
	\item[{\bf A3.}]  The center of mass of the platform lies on a straight line which is
	perpendicular to the axis of the wheel pair and passes through the center of mass of the wheel
	pair.
\end{itemize}
We single out one of the outermost platform and assume that several conditions are fulfilled for it.
\begin{itemize}
	\item[{\bf B1.}]  The point of attachment of the next platform  coincides with the center of mass of
	the wheel pair.
	\item[{\bf B2.}]   The platform  has a balanced rotor with a vertical
	axis of rotation. Its angular velocity  is a given periodic function of time.
\end{itemize}
In what follows, we will call this platform the {\it Chaplygin sleigh} because, in the absence of
other platforms ($N=0$)  and a rotor, this system reduces to the well-known problem of the
motion of the Chaplygin sleigh \citep{borisov2015hadamard}.
We will call the other $N$ platforms a {\it trailer}, for which the following
restriction is fulfilled.
\begin{itemize}
	\item[{\bf C1.}] For all platforms of the trailer, the center of mass of the wheel pair
	(which is the point of attachment of the wheel pair to the platform) lies in the middle of the
	segment joining the hinges.
\end{itemize}

The wheeled vehicle consisting of $N+1$ platforms for which assumptions {\bf A} and
{\bf B1} are satisfied, but such that the point of attachment of the wheel pair of each
platform (except for the first and the last) coincides with the point of articulation with
another platform was discussed in~\citep{Naranjo2015}.

\begin{figure*}[h!]
	\centering
	\includegraphics[scale=0.95]{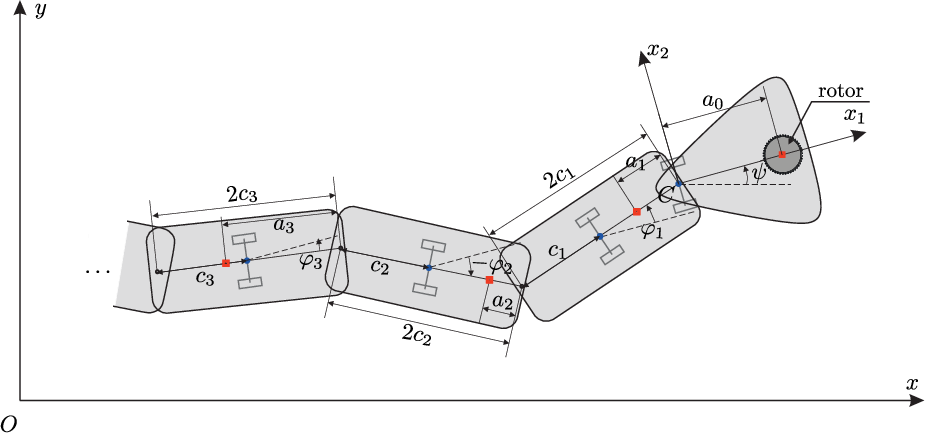}
	\caption{Diagrammatic representation of an $N$-link vehicle on a plane.}
	\label{pic0}
\end{figure*}

{\bf Configuration variables and notation.} To describe the motion of the system, we
introduce generalized coordinates parameterizing the  configuration space. To do so, we
define two coordinate systems:
\begin{itemize}
	\item[---] a \textit{fixed} coordinate system $O x y$ whose axes are fixed and
	lie in the supporting plane;
	\item[---] a \textit{moving} coordinate system $C x_1 x_2$ rigidly attached to the
	leading platform as shown in Fig.\ref{pic0}. Point $C$ coincides with the center of mass of
	the wheel pair, and the axis $Cx_1$ passes through the center of mass of the platform.
\end{itemize}
Let $\boldsymbol{r}_C = (x, y)$ be the radius vector of the center of mass of the wheel
pair of the Chaplygin sleigh (point~$C$) in the coordinate system $Oxy$. We will specify
the orientation of the Chaplygin
sleigh by the angle $\psi$ between the axes $Ox$ and $Cx_1$ and assume that the counterclockwise
direction from the axis $Ox$ is positive. In addition, we let $\varphi_i$ denote the angle between
the $i$th platform of the trailer and the Chaplygin sleigh that is measured from the axis
of the sleigh in the counterclockwise direction (see Fig.~\ref{pic0}). We note that the angles between the platforms are passive, i.e., they are not defined by predetermined functions of time or generalized coordinates.

Thus, the configuration space $\mathcal{N}$ for the system is
\begin{gather*}
\mathcal{N}=\{ \boldsymbol{q}=(x, y, \psi, \varphi_1, \varphi_2, \ldots, \varphi_{N}) \} \approx \mathbb{R}^2 \times\mathbb{T}^{N+1}.
\end{gather*}
To describe an ($N+1$)-link vehicle, we need to define, in addition to the generalized
coordinates $\boldsymbol{q}$, its mass-geometric characteristics, which are presented in
Table \ref{tab1}.

\begin{table}[h!]
	\centering
	\caption{Mass-geometric characteristics of an ($N+1$)-link vehicle}
	\label{tab1}
	\begin{tabular}{|c|l|}
		\hline
		Notation & Description\\
		\hline
		$m_i$ & Mass of the $i$th platform \\
		&  ($i = 0, \ldots, N$)\\
		\hline
		$I_i$ & Central moment of inertia of the $i$th platform \\
		& ($i = 0, \ldots, N$)\\
		\hline
		$a_0$ & Distance from the center of mass of the sleigh \\
		&  to the center of mass
		of the wheel pair \\
		\hline
		$c_i$ & Distance from the hinge of the $i$th platform \\
		& to the point of
		attachment of the wheel\tabularnewline &   pair  ($i = 1, \ldots, N$)\\
		\hline
		$a_i$ & Distance from the center of mass of the \\
		&  $i$th platform, measured from the right
		\tabularnewline &  hinge ($i = 1, \ldots, N$)\\
		\hline
	\end{tabular}
\end{table}

{\bf Nonholonomic constraints.} In \citep{borisov2015hadamard} it was shown that for the
system considered the conditions for the wheels to roll without slipping can be replaced with
the Chaplygin constraints\footnote{That is, we assume that the velocity of the platform at
	the center of mass of the wheel pair in the direction perpendicular to the
	plane of the wheels is zero.} and that the degrees of freedom describing
the turning angles of the wheels can be ignored. Therefore, in what follows we replace
each wheel pair with a weightless knife edge (skate) located at its center of mass.

Let us define the unit vectors directed along the normal and tangent to the plane of the
knife edge. For the sleigh they have the form
\begin{gather*}
\begin{gathered}
\boldsymbol{n}_0 = \big(-\sin \psi,\ \cos \psi \big), \quad \boldsymbol{\tau}_0 = \big(\cos \psi,\ \sin \psi \big),
\end{gathered}
\end{gather*}
and for the $i$th platform of the trailer they are given by the relations
\begin{flalign*}
\boldsymbol{n}_i &= \big(-\sin (\psi + \varphi_i),\ \cos(\psi + \varphi_i) \big), \quad
\boldsymbol{\tau}_i = \big(\cos (\psi + \varphi_i),\ \sin (\psi + \varphi_i)\big).
\end{flalign*}
Then, in the chosen variables, the constraint equations can be represented as
\begin{equation}
\label{eq_f}
\begin{aligned}
\dot{\boldsymbol{r}}_C \cdot \boldsymbol{n}_0 =& -\dot{x} \sin \psi + \dot{y} \cos \psi=0, \\
\dot{\boldsymbol{r}}_{C_i} \cdot \boldsymbol{n}_i =& -\dot{x} \sin(\psi + \varphi_i) + \dot{y} \cos(\psi +  \varphi_i) - 
&2\sum_{j=1}^{i-1} c_j (\dot{\psi} +\dot{\varphi}_j) \cos (\varphi_i - \varphi_j) - c_i (\dot{\psi} + \dot{\varphi}_i) = 0,
\end{aligned}
\end{equation}
where $\boldsymbol{a} \cdot \boldsymbol{b}$ denotes the scalar  product of the vectors~$\boldsymbol{a}$~and~$\boldsymbol{b}$.
Here $\boldsymbol{r}_{C_i} = \boldsymbol{r}_C -  2\sum\limits_{j=1}^{i-1} c_j \boldsymbol{\tau}_j - c_i \boldsymbol{\tau}_i$ is the radius vector of the point of attachment of the skate (wheel pair)
of the $i$th platform.

{\bf Kinetic energy.} The total kinetic energy of the system is composed of
the kinetic energies of every single platform:
\begin{equation}
\label{eq.L}
\begin{gathered}
T = T_0 + \sum_{i=1}^N T_i.
\end{gathered}
\end{equation}
Here the kinetic energy of the sleigh, $T_0$, and the kinetic energy of the $i$th platform, $T_i$,
have the form
\begin{gather}
T_0 = \frac{m_0}{2}\big| \boldsymbol{\dot{r}}_C + a_0\dot{\psi} \boldsymbol{n}_0 \big|^2 + \frac{I_0}{2}\dot{\psi}^2 + k(t)\dot{\psi},\label{eq.T_0}\\
T_i = \frac{m_i}{2}\big| \boldsymbol{\dot{r}}_C - a_i(\dot{\psi} + \dot{\varphi}_i) \boldsymbol{n}_i- 2\sum_{j=1}^{i-1}c_j(\dot{\psi} + \dot{\varphi}_j)\boldsymbol{n}_j \big|^2 + \frac{I_i}{2}(\dot{\psi} + \dot{\varphi}_i)^2,\notag	
\end{gather}
where $k(t)$ is the angular momentum generated by rotating the rotor.

\begin{rmk}
	The expression for the kinetic energy $T_0$ can be written as
	\begin{gather}\label{T_0}
		T_0 =  \frac{m_0}{2}\big| \boldsymbol{\dot{r}}_C + a_0\dot{\psi} \boldsymbol{n}_0 \big|^2 + \frac{I_p}{2}\dot{\psi}^2 + \frac{I_r}{2}(\dot{\psi} + \dot{\psi}_r)^2,
	\end{gather}
	where $I_p$ and $I_r$ are the moments of inertia of the platform and the rotor, and $\psi_r = \psi_r(t)$ is the turning angle of the rotor. We write equation \eqref{T_0} as
	\begin{gather}
	T_0 = \frac{m_0}{2}\big| \boldsymbol{\dot{r}}_C + a_0\dot{\psi} \boldsymbol{n}_0 \big|^2 + \frac{I_p + I_r}{2}\dot{\psi}^2 + I_r\dot{\psi} \dot{\psi}_r + \frac{I_r}{2}\dot{\psi}_r^2.
	\end{gather}
	The last term depends only on time and hence does not appear in the Lagrange-Euler equations~\eqref{eq_basis}, so that it can be excluded from consideration. Introducing the notation $I_0 = I_p + I_r$, $k(t) = I_r \dot{\psi}_r$, we obtain an expression for the kinetic energy \eqref{eq.T_0}.
\end{rmk}

\textbf{Equations of motion in quasi-velocities.}
Let $\boldsymbol{v} = (v_1, v_2)$ be the translational velocity of point $C$
referred to the coordinate system $C x_1 x_2$, and  let $\omega$ be the absolute angular
velocity of the sleigh. The relation of the quasi-velocities~$\boldsymbol{v}$ and $\omega$
to the generalized velocities
of the sleigh is given by
\begin{gather*}
v_1 =  \dot{x}\cos\psi + \dot{y}\sin\psi, \quad
v_2 =  -\dot{x}\sin\psi + \dot{y}\cos\psi, \quad
\omega = \dot{\psi}.
\end{gather*}
In addition, we introduce quasi-velocities $\boldsymbol{w} = (w_1, w_2, \ldots, w_N)$,
where  $w_{i}  = \dot{\boldsymbol{r}}_{C_i} \cdot \boldsymbol{n}_i$, $i=1,\ldots, N$.
In the chosen quasi-velocities the constraint equations \eqref{eq_f} have the following
simple form:
\begin{gather}\label{eq_fw}
v_2 = 0, \quad w_i = 0,  \quad i = 1,\ldots, N.
\end{gather}

The expression for the generalized velocities $\dot{\boldsymbol{q}}$ in terms of
the quasi-velocities $\boldsymbol{u} = (\boldsymbol{v}, \omega, \boldsymbol{w})$ can be
represented as
\begin{gather*}
\boldsymbol{\dot{q}} = v_1 \boldsymbol{\sigma}_v  + v_2 \boldsymbol{\nu}_v + \omega \boldsymbol{\sigma}_\omega  + \sum_{i=1}^{N}w_i\boldsymbol{\rho}_i,
\end{gather*}
where $\boldsymbol{\sigma}_v$, $\boldsymbol{\nu}_v$, $\boldsymbol{\sigma}_\omega$, $\boldsymbol{\rho}_i$
are vector fields in a configuration space $\mathcal{N}$ of the form
$$
\begin{aligned}
\boldsymbol{\sigma}_\omega =& \frac{\partial }{\partial \psi} -  \sum_{i=1}^N \frac{\partial }{\partial \varphi_i}, \quad
\boldsymbol{\sigma}_v= \cos\psi\frac{\partial }{\partial x} + \sin\psi\frac{\partial }{\partial y}  + \sum_{i=1}^N \frac{(-1)^{i}}{c_i}  \sin\theta_i \frac{\partial }{\partial \varphi_i}, \\
\boldsymbol{\nu}_v =& -\sin\psi\frac{\partial }{\partial x} + \cos\psi\frac{\partial }{\partial y}  - \sum_{i=1}^N \frac{(-1)^{i}}{c_i}  \cos\theta_i \frac{\partial }{\partial \varphi_i}, \\
\boldsymbol{\rho}_i = &-\frac{1}{c_{i}}\frac{\partial}{\partial \varphi_i} + \frac{2}{c_{i+1}}\cos(\varphi_i - \varphi_{i+1})\frac{\partial }{\partial \varphi_{i+1} } +
2\sum_{k=i+2}^{N}\frac{(-1)^{k+i+1}}{c_k}\cos\big( \theta_k -\theta_i \big)\frac{\partial}{\partial \varphi_k},
\quad i=1,\dots N-1, \\
\boldsymbol{\rho}_N =& -\frac{1}{c_{N}}\frac{\partial}{\partial \varphi_N}, 	
\end{aligned}
$$
where we have introduced the notation
\begin{equation}
\label{eq_theta_i}
\theta_i =  (-1)^{i+1}\varphi_i + 2\sum_{j=1}^{i-1}(-1)^{j+1} \varphi_j.
\end{equation}

To derive the equations of motion for the sleigh (taking the nonholonomic constraints into account) in the chosen quasi-velocities, it suffices to calculate
the commutator of vector fields $\boldsymbol{\sigma}_v$ and $\boldsymbol{\sigma}_\omega$
(for details, see \citep{borisov2015symmetries}), for which the following relation holds:
\begin{equation}
\label{eq_km}
[\boldsymbol{\sigma}_v, \boldsymbol{\sigma}_\omega]= - \boldsymbol{\nu}_v,
\end{equation}
where $[\cdot\, , \, \cdot]$ denotes Lie brackets. Then the equations of motion can be written as
\begin{equation}
\label{eq_basis}
\begin{gathered}
\dfrac{d}{dt}\left( \dfrac{\partial T^{\ast }}{\partial v_{1}}\right) - \boldsymbol{\sigma}_{v}(  T^{\ast }) = \left( \dfrac{\partial T}{\partial v_{2}}\right)^{\ast }\omega, \\
\dfrac{d}{dt}\left( \dfrac{\partial T^{\ast }}{\partial \omega }\right) - \boldsymbol{\sigma}_{\omega }(  T^{\ast } ) = -\left( \dfrac{\partial T}{\partial v_{2}}\right)^{\ast }v_{1},
\end{gathered}
\end{equation}
where $T$ is the kinetic energy \eqref{eq.L} written in quasi-velocities
$\boldsymbol{u} = (\boldsymbol{v}, \omega, \boldsymbol{w})$ and  $( \ )^{\ast }$ denotes substitution of the constraints \eqref{eq_fw}.

It can be seen from the expression \eqref{eq_km} that
the commutator has no vector fields $\boldsymbol{\rho}_i$,
therefore, equations \eqref{eq_basis} contain no terms of the form $\dfrac{\partial T}{\partial w_i}$.
Hence, to derive the equations of motion, it suffices to represent the kinetic energy as
$$
T = T^* + Cv_2 + \dots,
$$
where the dots denote terms depending on $\boldsymbol{w}$ and quadratic in $v_2$.
The expressions for the kinetic energy $T^*$ and the coefficient $C$ have the form
\begin{equation}
\begin{aligned}
\label{eq_TC}
&T^* = \frac{1}{2}\Phi_0(\boldsymbol{\varphi})v_1^2 + \frac{J}{2}\omega^2 + k(t)\omega, \quad
C = b\omega - \frac{v_1}{2}\sum_{i=1}^N\mu_i\sin(2 \theta_i), \\
&\Phi_0(\boldsymbol{\varphi}) = m + \sum_{i=1}^N\mu_i \sin^2 \theta_i>0, \quad  \boldsymbol{\varphi}=(\varphi_1, \dots, \varphi_N),
\end{aligned}
\end{equation}
where the following parameters are introduced:
\begin{gather}
\begin{gathered}\label{eq_J_m}
J = I_0 + m_0a_0^2, \quad m=\sum_{i=0}^Nm_i, \quad b=m_0a_0, \quad \mu_i=\frac{1}{c_i^2}\big(I_i + m_ia_i(a_i - 2c_i)\big).
\end{gathered}
\end{gather}	

Substituting the expressions \eqref{eq_TC} into \eqref{eq_basis}, we obtain equations
governing the evolution of $(v_1, \omega)$:
\begin{equation}
\label{eq_dvdw}
\begin{gathered}
\Phi_0(\boldsymbol{\varphi})\dot{v}_1 = b\omega^2 + \Phi_1(\boldsymbol{\varphi}) v_1^2 + \Phi_2(\boldsymbol{\varphi})\omega v_1, \\
J\dot{\omega} = {-}b\omega v_1 - \dot{k}, \\
\Phi_1(\boldsymbol{\varphi}) = \sum_{i=1}^N\mu_i\sin(2\theta_i)\left[ \frac{\sin \theta_i}{2c_i} + \sum_{j=1}^{i-1} \frac{\sin \theta_j}{c_j}  \right], \quad
\Phi_2(\boldsymbol{\varphi}) = {\frac{1}{2}}\sum_{i=1}^N\mu_i\sin(2\theta_i).
\end{gathered}
\end{equation}	
This system needs to be supplemented with equations governing the evolution of the angles
between the platforms:
\begin{equation}
\label{eq_dphi}
\dot{\varphi}_i=(-1)^{i}\frac{v_1}{c_i}\sin \theta_i - \omega, \quad i=1,\dots, N.
\end{equation}	
Equations \eqref{eq_dvdw} and \eqref{eq_dphi} form a reduced system.
To recover the motion of the Chaplygin sleigh in the fixed coordinate system from the known
solution of the reduced system, it is necessary to supplement it with the kinematic relations
\begin{gather}
\label{eq_kin}
\dot{x} = v_1 \cos \psi, \quad \dot{y} = v_1 \sin \psi, \quad \dot{\psi} = \omega.
\end{gather}

For the system we consider here, the following case is singled out:
\begin{equation}
\label{eq_mu}
\mu_1=0, \quad \dots, \quad \mu_N=0,
\end{equation}
in which the angles $\varphi_1, \dots, \varphi_N$ do not appear explicitly in the
equations of motion \eqref{eq_dvdw}.
As a result, the equations of motion
governing the evolution of the velocities, which coincide with the equations for the
Chaplygin sleigh, decouple from the general system.

\begin{rmk}
	Condition \eqref{eq_mu} implies that $I_i =  m_ia_i(2c_i - a_i)$.  Such a moment of inertia corresponds, for example, to the case where the mass distribution of the platform reduces to two point masses located as shown in Fig. \ref{fig0_} ($A_i\xi_i\eta_i$ is  the coordinate system on the $i$th platform that is attached to its center of mass, $d = \sqrt{a_i(2c_i - a_i)}$).	
	\begin{figure}[h]
		\centering
		\includegraphics{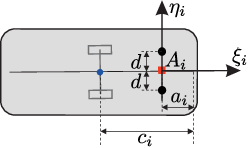}
		\caption{A schematic representation of the mass distribution on the $i$th platform for which $\mu_i=0$.}\label{fig0_}
	\end{figure}
\end{rmk}

\begin{rmk}
	Condition \eqref{eq_mu} was obtained in \citep{Mikishanina2021} for a similar wheeled
	vehicle consisting of two platforms.
\end{rmk}

	\section{The inertial motion and stability of partial solutions}\label{sec21}
Let the rotor be at rest ($k=0$). Then the equations of motion~\eqref{eq_dvdw} and \eqref{eq_dphi}
admit the energy integral
\begin{equation}
\label{eqE}
E = T^* = \frac{\Phi_0(\boldsymbol{\varphi})}{2}v_1^2 + \frac{J}{2}\omega^2 = \mathrm{const}.
\end{equation}

Let us fix the level set of the energy integral $E(v_1, \omega, \boldsymbol{\varphi})=h$
and introduce a new angle variable~$\vartheta \in[0, 2\pi)$:
$$
v_1=\sqrt{\frac{2h}{\Phi_0(\boldsymbol{\varphi})}}\cos\vartheta, \quad  \omega = \sqrt{\frac{2h}{J}}\sin\vartheta.
$$
Then, after rescaling time as $dt = \dfrac{\sqrt{\Phi_0(\boldsymbol{\varphi})}}{\sqrt{2h}}d\tau$,
the equation governing the evolution of the angle variable introduced above decouples from the general system:
\begin{equation}
\label{eq_vartheta}
\vartheta' = -\frac{b}{J}\sin\vartheta,
\end{equation}
where the prime denotes the derivative with respect to the new time $\tau$.
The equations for the angles defining the orientation of the platforms of the trailer
can be represented as
\begin{equation}
\label{eq_vartheta2}
\varphi_i' = \frac{(-1)^i}{c_i} \cos\vartheta \sin\theta_i - \frac{\sqrt{ \Phi_0(\boldsymbol{\varphi})}}{\sqrt{J}}\sin\vartheta, \quad i=1,\dots N.
\end{equation}

The system \eqref{eq_vartheta}-\eqref{eq_vartheta2} possesses isolated $2^{N+1}$ fixed
points which correspond to the motion of all platforms in the same straight line.
We divide them into two families, depending on the sign of the translational velocity $v_1$:
$$
\begin{gathered}
\Sigma_{+} = \big\{ \vartheta=0, \quad  \sin \theta_i=0, \quad i= 1,\dots N\big\};\\
\Sigma_{-} =\big\{ \vartheta=\pi, \quad  \sin \theta_i=0, \quad i= 1,\dots N\big\}.
\end{gathered}
$$
Thus, the family $\Sigma_{+}$ corresponds to motion where the Chaplygin sleigh
moves in the positive direction of the axis $Cx_1$, and the family $\Sigma_{-}$ corresponds
to motion of the sleigh in the opposite direction. In both families $\Sigma_\pm$
the fixed points differ, according to \eqref{eq_theta_i}, in the magnitude of angle
$\varphi_i$,
which takes the values 0 or $\pi$. If $\varphi_i = 0$, then the $i$th platform is aligned
relative to the preceding platform, and if $\varphi_i = \pi$, the platform overlaps with it.

To analyze the stability of the above-mentioned fixed
points, we introduce the following quantities:
$$
\sigma_0 = \cos\vartheta, \quad \sigma_i = \cos\theta_i, \quad i=1,\dots N,
$$
which for the families $\Sigma_\pm$ can take the values $+1$ or $-1$. Then the linearization
matrix of the system \eqref{eq_vartheta}-\eqref{eq_vartheta2} in a neighborhood of fixed points can be
represented as
\begin{gather}\small
\begin{gathered}
{\bf A} = \begin{pmatrix}
-\dfrac{b}{J}\sigma_0 &  0 & 0 &  \ldots & 0\\
- \dfrac{\sqrt{m}}{\sqrt{J}} \sigma_0 & -\dfrac{\sigma_0}{c_1} \sigma_1 & 0 & \ldots & 0\\
- \dfrac{\sqrt{m}}{\sqrt{J}} \sigma_0& \dfrac{2 \sigma_0}{c_2} \sigma_2 & -\dfrac{\sigma_0}{c_2} \sigma_2 & \ldots & 0\\
\hdotsfor{5}\\
- \dfrac{\sqrt{m}}{\sqrt{J}} \sigma_0 & (-1)^{N+2}\dfrac{2 \sigma_0}{c_N} \sigma_N & (-1)^{N+3}\dfrac{2 \sigma_0}{c_N} \sigma_N & \ldots & -\dfrac{\sigma_0}{c_N} \sigma_N
\end{pmatrix}.
\end{gathered}
\end{gather}
As can be seen, ${\bf A}$ is a triangular matrix, and hence all its eigenvalues lie on the main diagonal.
Thus, the following proposition holds.
\begin{pro}\label{pro1}
	The fixed points $\vartheta = 0$, $\boldsymbol{\varphi}=0$ and $\vartheta = \pi$, $\boldsymbol{\varphi}=0$
	are a stable and an unstable node, respectively. The other fixed points are of saddle type
	and hence unstable.
\end{pro}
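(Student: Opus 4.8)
The plan is to exploit the fact, already noted above, that the linearization matrix $\mathbf A$ is lower triangular, so that its eigenvalues are exactly its diagonal entries,
\[
\lambda_0 = -\frac{b}{J}\,\sigma_0, \qquad \lambda_i = -\frac{\sigma_0\,\sigma_i}{c_i}, \quad i=1,\dots,N .
\]
Since $b=m_0a_0>0$, $J=I_0+m_0a_0^2>0$, $c_i>0$, and $\sigma_0,\sigma_i\in\{+1,-1\}$ at every point of $\Sigma_\pm$, all $N+1$ eigenvalues are real and nonzero; hence each fixed point is hyperbolic, and its type is completely determined by the sign pattern $(\sigma_0,\sigma_1,\dots,\sigma_N)$. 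It then remains only to evaluate this pattern at each equilibrium.

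On $\Sigma_+$ we have $\vartheta=0$, so $\sigma_0=\cos\vartheta=+1$; on $\Sigma_-$ we have $\vartheta=\pi$ and $\sigma_0=-1$. At any equilibrium each $\varphi_j$ equals $0$ or $\pi$, so $2\varphi_j\equiv 0\pmod{2\pi}$; substituting this into \eqref{eq_theta_i} gives $\theta_i\equiv(-1)^{i+1}\varphi_i\pmod{2\pi}$ and therefore $\sigma_i=\cos\theta_i=\cos\varphi_i$. Thus $\boldsymbol\varphi=0$ is the unique equilibrium in each family with $\sigma_i=+1$ for all $i$, whereas $\boldsymbol\varphi\neq 0$ means $\varphi_{i_0}=\pi$ for some $i_0$, and then $\sigma_{i_0}=-1$.

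Combining these observations finishes the proof. At $(\vartheta=0,\boldsymbol\varphi=0)$ we have $\sigma_0=+1$ and all $\sigma_i=+1$, so $\lambda_0=-b/J<0$ and $\lambda_i=-1/c_i<0$: a stable node. At $(\vartheta=\pi,\boldsymbol\varphi=0)$ we have $\sigma_0=-1$ and all $\sigma_i=+1$, so $\lambda_0=b/J>0$ and $\lambda_i=1/c_i>0$: an unstable node. For any of the remaining fixed points $\boldsymbol\varphi\neq 0$, so there is an index $i_0$ with $\sigma_{i_0}=-1$, and then $\lambda_{i_0}=-\sigma_0\sigma_{i_0}/c_{i_0}=\sigma_0/c_{i_0}$ has sign opposite to $\lambda_0=-(b/J)\sigma_0$; hence $\mathbf A$ has eigenvalues of both signs and the point is a saddle, in particular unstable.

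Most of the work is already done by the triangular structure of $\mathbf A$, so the only real bookkeeping is the congruence $\theta_i\equiv(-1)^{i+1}\varphi_i\pmod{2\pi}$ on the equilibrium set, which turns the $2^{N+1}$ fixed points into the sign patterns $(\sigma_0,\sigma_1,\dots,\sigma_N)$. The one point worth flagging is the tacit nondegeneracy assumption $a_0\neq 0$ (so $b\neq 0$): if $a_0=0$ then $\lambda_0=0$, the equilibria cease to be hyperbolic, and the stability of the straight-line motions would have to be settled by a center-manifold reduction rather than by linearization alone.
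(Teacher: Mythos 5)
Your proof is correct and follows essentially the same route as the paper, which simply reads the eigenvalues off the diagonal of the triangular matrix $\mathbf{A}$ and substitutes the sign patterns of each fixed point. Your additional bookkeeping (the congruence $\theta_i\equiv(-1)^{i+1}\varphi_i\pmod{2\pi}$ at equilibria, and the remark that $b=m_0a_0>0$ is needed for hyperbolicity) just makes explicit what the paper leaves to ``direct substitution.''
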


The proof follows from direct substitution of the fixed points into the
linearization matrix ${\bf A}$.

Next, we consider the system's trajectories different from the fixed points.
We first note that the
system \eqref{eq_vartheta}-\eqref{eq_vartheta2}
possesses two invariant manifolds
\begin{equation}
\label{eqInvariant2}
\begin{gathered}
\mathcal{M}_+^{N-1}=\{ (\vartheta, \boldsymbol{\varphi}) \ | \  \vartheta=0 \}, \quad
\mathcal{M}_-^{N-1}=\{ (\vartheta, \boldsymbol{\varphi}) \ | \  \vartheta=\pi \},
\end{gathered}
\end{equation}
on which the above-mentioned fixed points $\Sigma_\pm \subset \mathcal{M}_\pm^{N-1}$ lie.
On these manifolds, the evolution of the remaining angles has the form
\begin{equation}
\label{eqInvariant}
\varphi_i'=\pm\frac{(-1)^i}{c_i}\sin\theta_i, \quad i=1,\dots N,
\end{equation}
where the upper sign corresponds to $\mathcal{M}_+^{N-1}$, and the lower sign, to
$\mathcal{M}_-^{N-1}$.
According to equations \eqref{eqInvariant}, the trajectories on these manifolds
are identical up to time inversion $\tau \to -\tau$.

As can be seen from equation \eqref{eq_vartheta}, the trajectories that do not lie on
\eqref{eqInvariant2} approach asymptotically $\mathcal{M}_+^{N-1}$. Numerical experiments
show that all trajectories lying on this manifold, in their turn, asymptotically approach
a stable node, except for saddle equilibrium points and the corresponding separatrices.

We illustrate this by a three-link vehicle $N = 2$.
If we denote the fixed points as
$$
\begin{gathered}
\Sigma_+^{(\phi_1, \phi_2)} = \{ \vartheta=0, \  \varphi_1=\phi_1, \varphi_2=\phi_2 \}, \\
\Sigma_-^{(\phi_1, \phi_2)} = \{ \vartheta=\pi, \  \varphi_1=\phi_1, \varphi_2=\phi_2 \},
\end{gathered}
$$
then there exist 8 fixed points:
\begin{enumerate}
	\item a stable node $\Sigma_+^{(0,0)}$ and an unstable  $\Sigma_-^{(0,0)}$ node;
	\item saddle points $\Sigma_\pm^{(0,\pi)}$, $\Sigma_\pm^{(\pi,0)}$, $\Sigma_\pm^{(\pi,\pi)}$.
\end{enumerate}

\begin{figure}[h]
	\centering
	\includegraphics{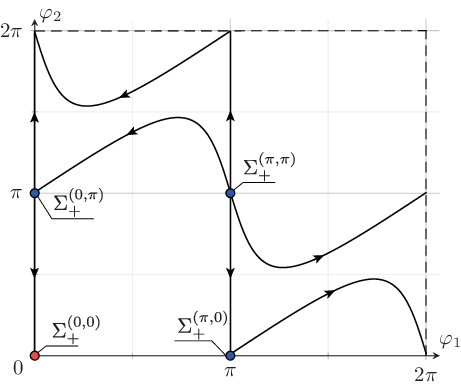}
	\caption{Phase portrait of the system \eqref{eqInvariant2} in the case of the
		manifold $\mathcal{M}_+^{2}$ for fixed $c_1=1$, $c_2=1.5$.}\label{fig01}
\end{figure}

\begin{figure*}[h]
	\centering
	\includegraphics[scale=0.85]{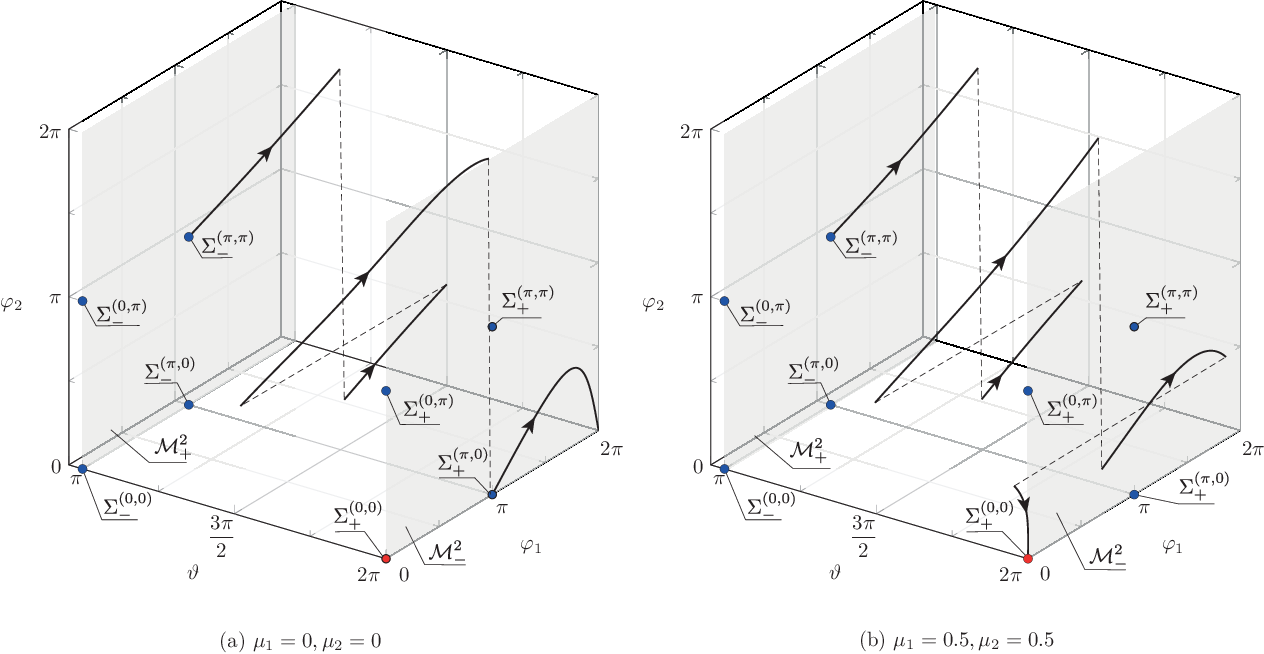}
	\caption{An example of the separatrix of the fixed point $\Sigma_-^{(\pi,\pi)}$ in the
		space $(\vartheta, \varphi_1, \varphi_2)$ for the fixed parameters  $c_1=1$, $c_2=1.5$, $b=0.5$, $m=3$, $J=1$ and different $\mu_1$ and $\mu_2$.
		The initial conditions (a) $\vartheta(0)=3.143592654$,  $\varphi_1(0)=3.143902055$, $\varphi_2(0)=3.147201199$, (b) $\vartheta(0)=3.141602654$,  $\varphi_1(0)=3.141604201$, $\varphi_2(0)=3.141620697$.}
	\label{fig02}
\end{figure*}

A typical phase portrait on $\mathcal{M}_+^{2}$ is shown in Fig. \ref{fig01}, where one can clearly see the separatrices joining the equilibrium points
$\Sigma_+^{(\pi,\pi)}$ and $\Sigma_+^{(\pi,0)}$, as well as $\Sigma_+^{(\pi,\pi)}$ and
$\Sigma_+^{(0, \pi)}$. Consequently, the trajectory in this portrait with initial conditions
in a neighborhood of $\Sigma_+^{(\pi,\pi)}$ is not ``attracted'' immediately to a stable
equilibrium point, but evolves at first into the neighborhood of the equilibrium point
$\Sigma_+^{(\pi,0)}$ or $\Sigma_+^{(0,\pi)}$.
The same can be valid for the trajectories that do not lie on $\mathcal{M}_+^{2}$.
An example of such a trajectory  in case \eqref{eq_mu} is given in Fig. \ref{fig02}a.
Figure \ref{fig02}b shows an example of a trajectory which also emanates from the neighborhood
of $\Sigma_-^{(\pi,\pi)}$, but which is attracted ``immediately'' to the equilibrium point
$\Sigma_-^{(0,0)}$ without entering into the neighborhoods of other fixed points.

The trajectories of the point of attachment of the wheel pairs (skates) to the Chaplygin
sleigh and each platform of the trailer during motion along the trajectory in Fig. \ref{fig02}
are shown in Fig. \ref{fig03} and Fig. \ref{fig04}.  In these figures,
one can clearly see cusp points which correspond to the U-turn of the corresponding platform.

\begin{figure}[h]
	\centering
	\includegraphics[scale=0.85]{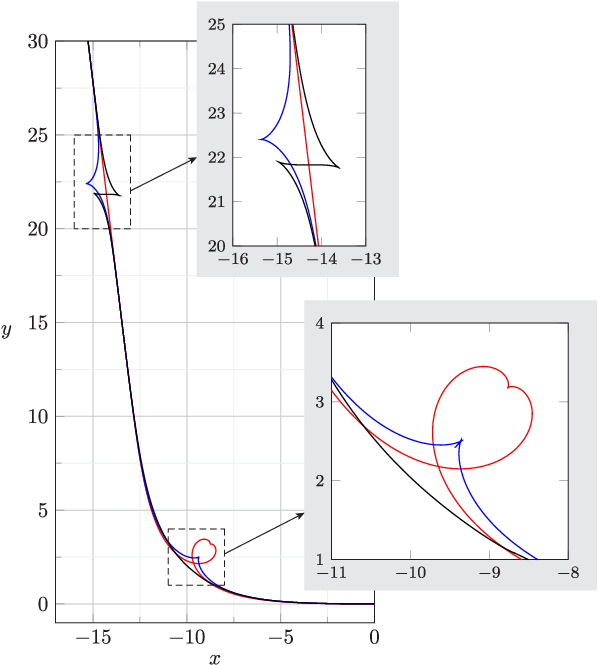}
	\caption{The trajectory of the points of attachment of the wheel pairs in the
		three-link vehicle for the trajectory in Fig. \ref{fig02}a and the initial conditions
		$\psi(0)=0$, $x(0)=0$, $y(0)=0$. The red line corresponds to the Chaplygin sleigh, and the
		blue and black lines correspond to the first and the second platform of the trailer, respectively. }
	\label{fig03}
\end{figure}

\begin{figure}[h]
	\centering
	\includegraphics[scale=0.85]{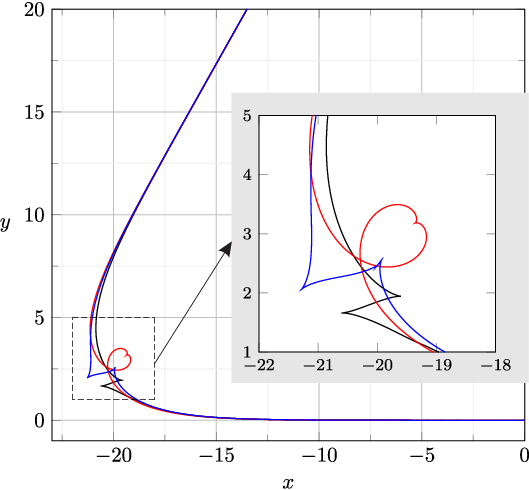}
	\caption{The trajectory of the points of attachment of the wheel pairs in the three-link
		vehicle for the trajectory in Fig. \ref{fig02}b and the initial conditions $\psi(0)=0$, $x(0)=0$, $y(0)=0$.
		The red line corresponds to the Chaplygin sleigh, and the blue and black lines correspond to
		the first and the second platform of the trailer, respectively. }
	\label{fig04}
\end{figure}

\section{Acceleration with a rotor added}
In the preceding section, it was shown that in the absence of  the angular momentum of the
rotor ($k=0$) the phase space of the system \eqref{eq_dvdw}-\eqref{eq_dphi} is foliated
on the level surface of the energy integral \eqref{eqE}.
On each level surface there is an asymptotically stable fixed point of node type for
which
$\boldsymbol{\varphi}=0$, $v_1={\rm const}$, $\omega=0$. This gives rise to a one-parameter
family of asymptotically stable fixed points in the phase space of the initial system.

In this section, we consider what influence
the angular momentum $k(t)$ has on the motion of the ($N+1$)-link vehicle.
We will specify $k(t)$ in the form of a periodic function of time with period $P$. In this
case, the energy is not conserved. However, from a physical point of view it is clear that,
for sufficiently large values of velocities, the contribution of the angular momentum
turns out to be a small quantity. Hence, the trajectories, with initial conditions in a
small neighborhood of the one-parameter family of equilibrium points with large values of $v_1$,
must stay in the same neighborhood for a fairly long time. In the general case  $v_1$
can both increase and decrease.

To analyze the equations of motion \eqref{eq_dvdw} and \eqref{eq_dphi}, it is more convenient to
transform from the variables $\varphi_i$ to the variables $\theta_i$ according to
relation \eqref{eq_theta_i}. This transformation can be represented as
\begin{gather*}
\boldsymbol{\theta} = {\bf B} \boldsymbol{\varphi}^{T}, \quad {\bf B} = \begin{pmatrix}
1 & 0 & 0 & \ldots & 0\\
2 & -1 & 0 & \ldots & 0\\
2 & -2 & 1 & \ldots & 0\\
\hdotsfor{5}\\
2 & -2 & 2 & \ldots & (-1)^{N+1}\\
\end{pmatrix},
\end{gather*}
where ${\bf B}$ is a unimodular matrix. Then the system under study takes the form
\begin{gather}\label{eq_dvdwk}
\begin{gathered}
\Phi_0(\boldsymbol{\theta})\dot{v}_1 = b\omega^2 + \Phi_1(\boldsymbol{\theta}) v_1^2 + \Phi_2(\boldsymbol{\theta})\omega v_1, \quad
J\dot{\omega} = {-}b\omega v_1 - \dot{k},\\
\dot{\theta}_i= -\frac{v_1}{c_i}\sin \theta_i - 2 \sum_{j=1}^{i - 1} \frac{v_1}{c_j}\sin \theta_j - \omega, \quad i=1,\dots N.
\end{gathered}
\end{gather}
\begin{rmk}
	The equations for $\theta_i$, $i=1,\dots, N$ in \eqref{eq_dvdwk}
	have been obtained using the identity
	\begin{gather}\label{prop1}
	(-1)^i + 2 \sum_{j=1}^{i-1} (-1)^j = -1.
	\end{gather}
\end{rmk}

To analyze the system \eqref{eq_dvdwk}, we use the approach proposed in \citep{BizyaevMamaev2023},
where a criterion for the onset of accelerating trajectories is found for the Roller Racer
with a periodically changing angular momentum. Both systems share the feature that, in the
absence of angular momentum, a one-parameter family of 
equilibrium points exists in the phase space, and these 
systems differ in the dimension of the phase space. As a result, the following proposition holds
for the system \eqref{eq_dvdwk}.

\begin{pro}\label{pro2}
	There exist $\varepsilon>0$ small enough and constant $\widetilde{\omega}$, $\widetilde{\theta}_i$
	such that, for the trajectory of the reduced system \eqref{eq_dvdwk} with the initial
	conditions $\dfrac{1}{\varepsilon}<v_1$, $|\omega|<\widetilde{\omega}\varepsilon^2$, $|\theta_i|<\widetilde{\theta}_i\varepsilon^2$,
	we have $v_1\to +\infty$, $\omega \to 0$, $\theta_i \to 0$ in the form
	\begin{gather}\label{eq.appr}
	\begin{gathered}
	v_1(t) =  \Delta^{1/3} t^{1/3} + o(t^{1/3}), \quad \omega(t) = -\frac{\dot{k}}{b \Delta^{1/3}} t^{-1/3} + o(t^{-1/3}),\\	
	\theta_i(t) = (-1)^{i + 1} \frac{c_i \dot{k}}{b} \Delta^{-2/3} t^{-2/3}  + o(t^{-2/3}), \quad i = 1, 2, \ldots, N,\\
	\Delta = \frac{3 \langle \dot{k}^2\rangle}{b m}, \quad \langle \dot{k}^2\rangle = \frac{1}{P} \int\limits_0^P \dot{k}^2 dt.
	\end{gathered}
	\end{gather}
\end{pro}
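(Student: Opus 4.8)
\emph{Sketch of proof.} The plan is to adapt to arbitrary $N$ the slow--fast analysis of \citep{BizyaevMamaev2023}: in \eqref{eq_dvdwk} the variable $v_1$ is slow (large, growing like $t^{1/3}$), while $(\omega,\theta_1,\dots,\theta_N)$ are fast variables that relax to a $P$-periodically forced quasi-steady manifold. First I would fix the scaling dictated by the target \eqref{eq.appr}: on the accelerating solution $\omega=O(v_1^{-1})$, $\theta_i=O(v_1^{-2})$ and $\dot v_1=O(v_1^{-2})$, so I would rescale the fast variables (say $\omega=\eta/v_1$, $\theta_i=\xi_i/v_1^{2}$) and use $\varepsilon\sim v_1^{-1}$ as a slowly varying small parameter; the initial data in the statement then form a ball of radius $\sim\varepsilon^{2}$ about the equilibrium $\omega=0$, $\boldsymbol\theta=0$ on the slice $v_1>1/\varepsilon$.

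Second, I would check that the fast subsystem is uniformly exponentially contracting. Linearising the $(\omega,\boldsymbol\theta)$-equations of \eqref{eq_dvdwk} at the origin gives a lower-triangular matrix with diagonal $v_1\big(-\tfrac{b}{J},-\tfrac1{c_1},\dots,-\tfrac1{c_N}\big)$ --- the structure already displayed by ${\bf A}$ in Section~\ref{sec21} at the stable node; since these entries are strictly negative and bounded away from $0$ (as $v_1\geq 1/\varepsilon$), the fast flow is uniformly exponentially contracting at rate $\gtrsim v_1$ in spite of the periodic forcing $-\dot k$. Hence there is an attracting invariant manifold $\omega=W(t,v_1)$, $\theta_i=\Theta_i(t,v_1)$, $P$-periodic in $t$, determined order by order in $v_1^{-1}$ by the quasi-steady relations
\begin{gather*}
b\,W v_1+\dot k=-J\dot W,\\
\frac{v_1}{c_i}\sin\Theta_i+2\sum_{j=1}^{i-1}\frac{v_1}{c_j}\sin\Theta_j=-W-\dot\Theta_i,\quad i=1,\dots,N.
\end{gather*}
Expanding and invoking the identity \eqref{prop1} yields $\tfrac{v_1}{c_i}\Theta_i=(-1)^i W+\dots$, i.e.\ $\Theta_i=(-1)^i c_i W/v_1+\dots$, together with $W=-\dot k/(b v_1)+O(v_1^{-2})$; substituting these, and the $v_1$-asymptotics found next, into these expressions reproduces the claimed expansions of $\omega$ and $\theta_i$ in \eqref{eq.appr}.

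Third, I would write the reduced equation on this manifold. There $\Phi_0(\boldsymbol\theta)=m+O(v_1^{-4})$, and the decisive point is that the two terms of \eqref{eq_dvdwk} other than $b\omega^{2}$ cancel at leading order: using $\Theta_i=(-1)^i c_i W/v_1+\dots$ and once more \eqref{prop1} one gets $\Phi_1 v_1^{2}=-W^{2}\sum_i\mu_i(-1)^i c_i+o(v_1^{-2})$ and $\Phi_2\,\omega\,v_1=+W^{2}\sum_i\mu_i(-1)^i c_i+o(v_1^{-2})$, whence
$$
m\dot v_1=b\omega^{2}+o(v_1^{-2})=\frac{\dot k^{2}}{b\,v_1^{2}}+o(v_1^{-2}).
$$
Averaging over one period --- admissible because $v_1$ moves only by $O(v_1^{-3})$ in time $P$ --- gives $\tfrac{d}{dt}\big(v_1^{3}\big)=3v_1^{2}\dot v_1=\Delta+o(1)$ with $\Delta=3\langle\dot k^{2}\rangle/(bm)$, hence $v_1=\Delta^{1/3}t^{1/3}+o(t^{1/3})$ (the degenerate case $\dot k\equiv0$, $\Delta=0$, being the trivial inertial one).

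Finally --- and this is where the real work lies --- I would make the slow--fast reduction rigorous, on the infinite time interval and for the whole open set of initial data in the statement rather than for one orbit. The obstacle is the circular coupling: controlling the growth of $v_1$ requires the fast variables to stay close to the quasi-steady manifold, while the contraction rate in the fast directions is itself only $\sim v_1\sim t^{1/3}$. I would resolve it by a bootstrap/continuity argument: build a forward-invariant region in which $v_1>1/\varepsilon$ and, after a transient of duration $O(\varepsilon)$ in which $v_1$ barely changes, the orbit stays within $o(v_1^{-1})$ and $o(v_1^{-2})$ of $W$ and the $\Theta_i$; forward-invariance follows by combining the exponential attraction in the fast directions (a weighted-norm Gr\"onwall estimate absorbing the $\dot k$-forcing) with the estimate that $\dot v_1$ is bounded above and below by positive constant multiples of $v_1^{-2}$ once the fast part is controlled, which also gives that $v_1$ is non-decreasing to leading order and $v_1\to+\infty$. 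A final refinement of these estimates on the manifold, together with the averaged equation for $v_1^{3}$, upgrades the $O(\cdot)$ bounds to the precise leading coefficients in \eqref{eq.appr}. Since the number of links enters only through the linear-algebra facts above --- triangularity (hence uniform exponential stability) of the fast subsystem and the identity \eqref{prop1} --- the analytic estimates carry over essentially verbatim from the two-link treatment of \citep{BizyaevMamaev2023}.
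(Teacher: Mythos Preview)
Your approach is correct and essentially mirrors the paper's: both identify a quasi-steady manifold for $(\omega,\boldsymbol\theta)$ graphed over the large variable $v_1$, compute the leading terms $\omega\approx-\dot k/(bv_1)$ and $\theta_i\approx(-1)^{i+1}c_i\dot k/(bv_1^{2})$ via the identity \eqref{prop1}, note the cancellation of the $\Phi_1$- and $\Phi_2$-contributions, and then average to get $\dot v_1\approx\langle\dot k^{2}\rangle/(bmv_1^{2})$. The only cosmetic difference is that the paper first passes to $p=1/v_1$, $q=\omega/v_1$ and a rescaled time $d\tau=v_1\,dt$, so that the stable nodes become an actual curve of fixed points at $p=0$ and the slow manifold is obtained as a formal power series in $p$; your slow--fast wording and the paper's expansion are two descriptions of the same computation, and your bootstrap paragraph in fact goes beyond the paper, which leaves the averaging step at the formal level.
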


\begin{proof}
	Let us pass from the velocities $v_1$, $\omega$ to the variables $p$, $q$ as follows:
	\begin{gather}
	p = \frac{1}{v_1}, \quad q = \frac{\omega}{v_1},
	\end{gather}
	and rescale time as $d\tau = v_1 d t = d t / p$.
	In addition, we define the angle variable $\Psi = t \mod P$.
	Then equations \eqref{eq_dvdwk} in the new variables become
	\begin{gather}
	\begin{gathered}\label{eq_pq_tau}
	{p}' = -\frac{p}{\Phi_0(\boldsymbol{\theta})} \bigg(b q^2 + \Phi_1(\boldsymbol{\theta}) + q \Phi_2(\boldsymbol{\theta})\bigg), \\
	{q}' = -\frac{p^2}{J} \bigg( \frac{b q}{p^2} + \frac{dk}{d\Psi} \bigg) - \frac{q}{\Phi_0(\boldsymbol{\theta})} \bigg(b q^2 + \Phi_1(\boldsymbol{\theta}) + q \Phi_2(\boldsymbol{\theta})\bigg),\\
	{\theta}_i' = -\frac{\sin \theta_i}{c_i} - 2 \sum_{j=1}^{i - 1} \frac{\sin \theta_j}{c_j} - q, \quad i=1,\dots, N, \\
	{\Psi}'=p,
	\end{gathered}
	\end{gather}
	where the prime denotes the derivative with respect to the variable $\tau$.
	
	Note that $p=0$ defines the invariant manifold of the system \eqref{eq_pq_tau},
	which has a one-parameter family of fixed points given by the following relations:
	\begin{gather}\label{eq_fps}
	p = 0, \quad q = 0, \quad \theta_i = 0, \quad i = 1,2, \ldots, N,
	\end{gather}
	i.e., it is parameterized by the variable $\Psi$.
	In a neighborhood of this family the system \eqref{eq_pq_tau} can be represented as
	
	\begin{figure*}[h]
		\centering
		\includegraphics[width=0.95\linewidth]{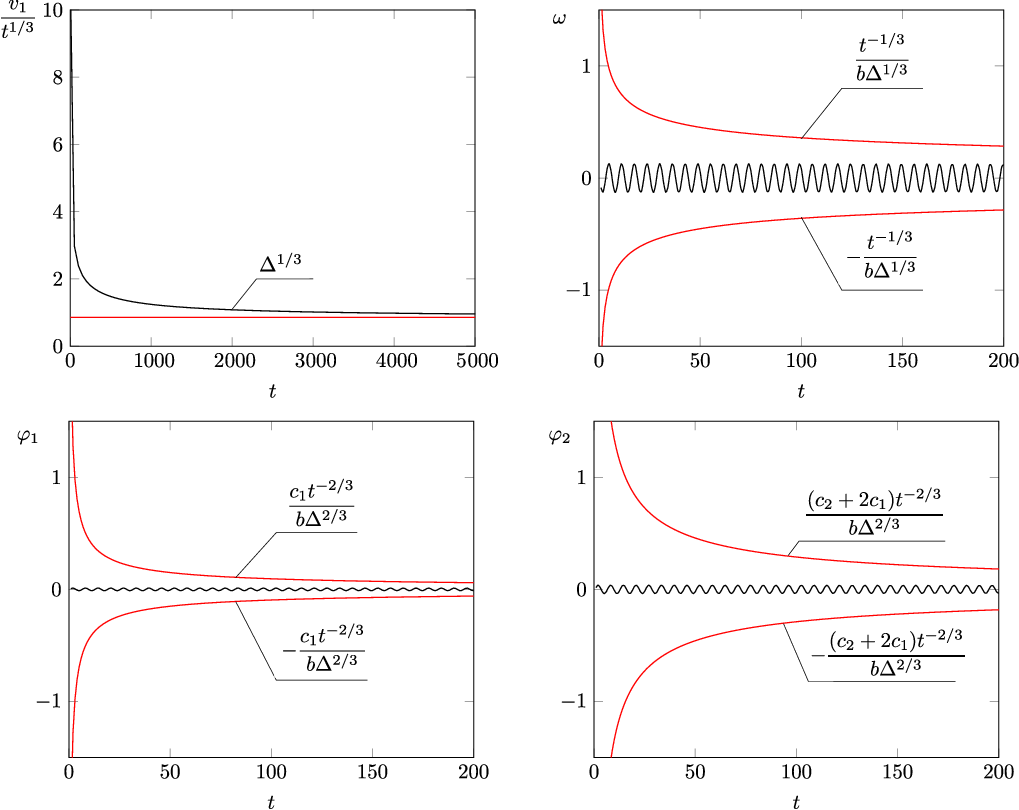}
		\caption{Dependences $t^{-1/3} v_1(t)$ and $\omega(t)$ (above), $\varphi_1(t)$ (below
			on the left), $\varphi_2(t)$ (below on the right)}\label{pic9}
	\end{figure*}
	
	\begin{figure*}[h]
		\centering
		\includegraphics[width=0.95\linewidth]{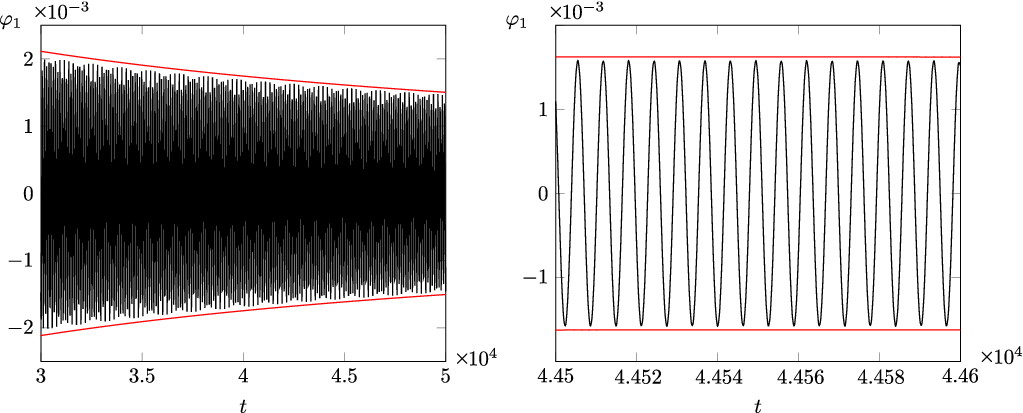}
		\caption{An enlarged fragment of the dependence $\varphi_1(t)$ at large times}\label{pic10}
	\end{figure*}

	\begin{gather}
	\frac{d \boldsymbol{z}}{d \tau} = {\bf C} \boldsymbol{z} + \boldsymbol{F}(\boldsymbol{z}, \Psi), \quad \frac{d\Psi}{d\tau}=p, \qquad \boldsymbol{z} = (p, q, \theta_1, \ldots, \theta_N), \label{dz} \\
	{\bf C} =
	\begin{pmatrix}
	0 & 0 & 0 & 0 & \ldots & 0\\
	0 & -\dfrac{b}{J} & 0 & 0 & \ldots & 0\\
	0 & -1 & -\dfrac{1}{c_1} & 0 & \ldots & 0\\
	0 & -1 & -\dfrac{2}{c_1} & -\dfrac{1}{c_2} & \ldots & 0\\
	\hdotsfor{6}\\
	0 & -1 & -\dfrac{2}{c_1} & -\dfrac{2}{c_2} & \ldots & -\dfrac{1}{c_N}
	\end{pmatrix},\notag
	\end{gather}
	where $\boldsymbol{F}(\boldsymbol{z}, \Psi)$ is a periodic function in $\Psi$ which contains
	$z$ of second and higher degrees. Note that the matrix of the linear part ${\bf C}$ has one
	zero eigenvalue and the other nonzero eigenvalues are negative.

	Since the spectrum of the linear part of the system \eqref{dz} contains only
	zero and negative eigenvalues, we represent $q$, $\theta_1$, $\theta_2$, $\ldots$, $\theta_N$
	as series in powers of $p$:
	\begin{gather}\label{series_q_varphi}
	\begin{gathered}
	q = \alpha_2 p^2 + \alpha_3 p^3 + O(p^4), \\ \theta_1 = \beta^{(1)}_2 p^2 + \beta^{(1)}_3 p^3 + O(p^4), \\
	\theta_2 = \beta^{(2)}_2 p^2 + \beta^{(2)}_3 p^3 + O(p^4), \quad \ldots, \\ \theta_N = \beta^{(N)}_2 p^2 + \beta^{(N)}_3 p^3 + O(p^4).
	\end{gathered}
	\end{gather}
	The expansions of the functions $\Phi_0(\boldsymbol{\theta})$, $\Phi_1(\boldsymbol{\theta})$, $\Phi_2(\boldsymbol{\theta})$ can be represented as
	\begin{gather}\label{series_Phi_varphi}
	\begin{aligned}
	\Phi_0(\boldsymbol{\theta}) = &m + \sum_{i=1}^N \mu_i \big(\beta^{(i)}_2 p^2 + \beta^{(i)}_3 p^3\big)^2 = 
	 m + \gamma_1^{(0)} p^4 + \gamma_2^{(0)} p^5 + \ldots,\\
	\Phi_1(\boldsymbol{\theta}) =& \sum_{i=1}^N 2 \mu_i \big(\beta^{(i)}_2 p^2 + \beta^{(i)}_3 p^3\big) \bigg[ \frac{\beta^{(i)}_2 p^2 + \beta^{(i)}_3 p^3}{2c_i} 
	+ \sum_{j=1}^{i-1} \frac{\beta^{(j)}_2 p^2 + \beta^{(j)}_3 p^3}{c_j}  \bigg] =
	\gamma_1^{(1)} p^4 + \gamma_2^{(1)} p^5 + \ldots,\\
	\Phi_2(\boldsymbol{\theta}) =& {\frac{1}{2}}\sum_{i=1}^N 2 \mu_i \big(\beta^{(i)}_2 p^2 + \beta^{(i)}_3 p^3\big)= \gamma_1^{(2)} p^2 + \gamma_2^{(2)} p^3 + \ldots,
	\end{aligned}
	\end{gather}
	where $\gamma_{1}^{(i)}$ and $\gamma_{2}^{(i)}$ are some constants expressed in terms
	of the expansion coefficients \eqref{series_q_varphi} and the system parameters.
	
	Substituting \eqref{series_q_varphi} and \eqref{series_Phi_varphi} into
	\eqref{eq_pq_tau} and matching the coefficients in front of equal powers of $p$, we obtain
	expressions for the
	expansion coefficients \eqref{series_q_varphi}:
	\begin{gather}\label{coeffs}
	\begin{gathered}
	\alpha_2 = -\frac{1}{b}\frac{dk}{d\psi}, \quad \alpha_3 = 0,\quad
	\beta^{(i)}_2 = (-1)^i c_i \alpha_2 = (-1)^{i + 1} \frac{c_i}{b}\frac{dk}{d\psi}, \\ \beta^{(i)}_3 = 0, \quad i = 1, 2, \ldots, N.
	\end{gathered}
	\end{gather}
	After substituting \eqref{coeffs} into the expansions \eqref{series_Phi_varphi}
	using~\eqref{prop1}, they take the following form:
	\begin{gather*}
	\Phi_0(\boldsymbol{\theta}) = m + \alpha_2^2 p^4 \sum_{i=1}^N \mu_i c_i^2, \quad
	\Phi_1(\boldsymbol{\theta}) = -\alpha_2^2 p^4 \sum_{i=1}^N (-1)^i \mu_i c_i,\quad
	\Phi_2(\boldsymbol{\theta}) = \alpha_2 p^2 \sum_{i=1}^N (-1)^i \mu_i c_i.
	\end{gather*}
	We now substitute these expansions into the first of equations \eqref{eq_pq_tau} and
	divide it by the last equation. Finally, setting $p \ll 1$, after averaging over $\Psi$ and
	passing to the initial time $t$, we obtain
	\begin{gather}
	\dot{p} = -\frac{\langle\dot{k}^2\rangle}{bm}p^4, \quad p(t) = \Delta^{-1/3} t^{-1/3}, \label{eq_p} \\ \Delta = \frac{3 \langle \dot{k}^2\rangle}{b m}, \quad \langle \dot{k}^2\rangle = \frac{1}{P} \int\limits_0^P \dot{k}^2 dt.\notag
	\end{gather}
	
	Substituting \eqref{eq_p} and \eqref{coeffs} into the expansion \eqref{series_q_varphi}
	and transforming to the initial variables $v_1$, $\omega$ and $\theta_i$, we obtain the
	estimates \eqref{eq.appr}.
\end{proof}

The physical meaning of Proposition \ref{pro2} is that under some initial
conditions the Chaplygin sleigh exhibits an unbounded speedup, and that the trailer moves
in such a way that the angles of deviation of the platforms of the trailer relative to
the Chaplygin sleigh have the following
asymptotics:
\begin{gather}\label{ap_phi}
\varphi_i(t) = \frac{\dot{k}}{b} \bigg(c_i +2 \sum\limits_{j=1}^{i-1} c_j\bigg) \Delta^{-2/3} t^{-2/3}  + o(t^{-2/3}), \quad i = 1,\ldots N.
\end{gather}
Consequently, the oscillations of the trailer decay as it moves behind the sleigh.
We also note that the asymptotics
\eqref{eq_pq_tau} do not depend explicitly on the parameters $\mu_i$,  $i=1,\dots N.$

Proposition \ref{pro2} describes the behavior of trajectories with a
fairly large value of velocity $v_1$. Numerical experiments show that, for
trajectories with a small initial value of $v_1$, a speedup arises as well, i.e.,
$v_1\to+\infty$. Figure \ref{pic9} shows dependences of the quantities $t^{-1/3}v_1(t)$,
$\omega(t)$, $\varphi_1(t)$, $\varphi_2(t)$ which have been obtained from the solution of equations \eqref{eq_dvdw} and
\eqref{eq_dphi} for $N = 2$ under the initial conditions
\begin{gather}\label{init_v}
v_1(0) = 10, \quad \omega(0) = 1, \quad \varphi_1(0) = \varphi_2(0) = 0.5,
\end{gather}
for the following parameter values:
\begin{gather}\label{param}
\begin{gathered}
a_0 = 0.7, \quad m_0 = 1,\quad J_0 = 1.5, \quad m_1 = m_2 = 1.2, \\ J_1 = J_2 = 2,  a_1 = 0.1, \quad a_2= 0.2, \\ c_1 = 1.05, \quad c_2= 1.10.		
\end{gathered}
\end{gather}
These dependences are shown as black continuous lines. Also, the red lines in Fig. \ref{pic9}
indicate the estimates of the amplitude above and below on the basis of \eqref{eq.appr} and
\eqref{ap_phi}. In addition, Fig. \ref{pic10} shows the dependence $\varphi_1(t)$
for large times. As can be seen, relations \eqref{eq.appr}
provide a fairly good description of the behavior of the trajectories.

The trajectories of the points of attachment of the wheel pairs to the Chaplygin sleigh
and to the platforms for the three-link vehicle ($N = 2$) with parameters \eqref{param}
and the initial conditions \eqref{init_v}, $\psi(0)=0$, $x(0)=0$, $y(0)=0$ are shown in
Fig. \ref{pic11}. The red line corresponds to the Chaplygin sleigh, and the blue and black
lines correspond to the first and the second platform
of the trailer, respectively.

\begin{figure}[h]
	\centering
	\includegraphics[scale=0.85]{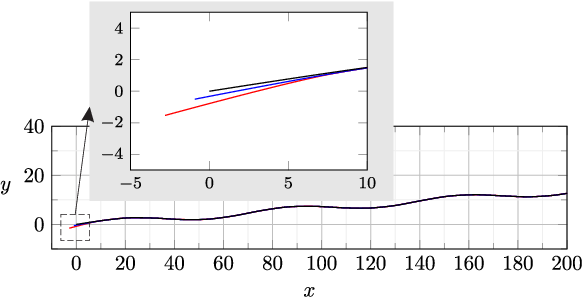}
	\caption{The trajectory of the points of attachment of the wheel pairs in the three-link
		vehicle for the initial conditions \eqref{init_v} and $\psi(0)=0$, $x(0)=0$, $y(0)=0$. The red
		line corresponds to the Chaplygin sleigh, and the blue and black lines correspond to the first
		and the second platform of the
		trailer, respectively. }
	\label{pic11}
\end{figure}

\section{Conclusion}

To conclude, we present the most interesting results and outline avenues for further
research.

In this paper, we have examined the dynamics of an $(N + 1)$-link wheeled vehicle.
For the inertial motion, we have found partial solutions (fixed points, invariant manifolds)
of such a system. For motion in the presence of the rotor of the $(N+1)$-link vehicle we
have shown the existence of trajectories such that one of their velocity components
increases without bound and has the asymptotics $t^{1/3}$. Interestingly,
the deviations of each of the platforms of the trailer relative to the Chaplygin sleigh
(the leading platform) tend to zero.

One avenue of further research is to analyze the motion of a multilink wheeled vehicle
in a more general case, for example, without assumptions {\bf B1} and {\bf C1}.
An open problem is to prove that, apart from the equilibrium point, there are no other regular attractors (limit cycles or tori) in the system \eqref{eqInvariant}.

\section*{Acknowledgements}

The authors extend their gratitude to Alexander A. Kilin, Ivan S. Mamaev and Evgeny
V. Vetchanin for fruitful discussions. The work was supported by the Russian Science
Foundation (No. 21-71-10039).

\bibliographystyle{elsarticle-num}
\bibliography{AB_bib}

\end{document}